\definecolor{Gray}{gray}{0.9}
\newcommand{\I}{{\mathcal{I}}}    
\renewcommand{\O}{{\mathcal{O}}}  
\newcommand{\E}{{\mathcal{E}}}   
\newcommand{\B}{{\mathfrak{B}}}
\newcommand{\QQ}{{\mathbb{Q}}}
\newcommand{\ZZ}{{\mathbb{Z}}}
\newcommand{\CC}{{\mathbb{C}}} 
\newcommand{\FF}{\mathbb{F}}     
\newcommand{\GG}{{\mathbb{G}}}   
\newcommand{\PP}{{\mathbb{P}}}
\begin{document}

\begin{frontmatter}
\title{Examples of special quadratic birational transformations into complete intersections of quadrics}
\author{Giovanni Staglian\`o}
\address{Instituto de Matem\'atica -- Universidade Federal Fluminense}
\date{\today}
\ead{\href{mailto:giovannistagliano@gmail.com}{giovannistagliano@gmail.com}}
\thanks{This work was supported by a BJT fellowship from CAPES (N. A028/2013).}
\begin{abstract}
In our previous works 
(\citeyear{note}, \citeyear{note2}), 
we provided a finite list of properties characterizing all potential types of
 quadratic birational transformations of a projective space 
into a factorial variety, whose base locus is smooth and irreducible.
However, some existence problems remained open.
Among them one had to prove that the image of a given transformation
 was factorial, but in two particular situations,  
 even the mere existence of the transformation 
 was left as an open problem.
In this paper, we use computer algebra methods to construct explicitly
four  examples of such transformations; two of them were among those for which 
it was not known that the image was factorial, and the other two show the existence of the two   
above referred transformations. 
\end{abstract}
\begin{keyword}
Birational transformation, base locus, threefold. 
\MSC 14E05, 
     14Q15  
     14J30  
\end{keyword}
\end{frontmatter}

\section{Introduction}
A birational transformation $\varphi:\PP^n\dashrightarrow Z\subseteq\PP^N$
of a projective space into a 
nondegenerate, linearly normal, factorial variety  $Z\subseteq \PP^N$ 
is called \emph{special} if its base locus $\B\subset\PP^n$ 
is smooth and irreducible 
(here, a factorial variety is a variety all of whose local rings are UFD).
In this situation, one has that  
the support of the singular locus of $Z$  
is contained in the support of the base locus 
 of $\varphi^{-1}$; 
 we also always assume that
  this inclusion is strict, i.e. that $Z$ is ``not too singular''.
The special transformation $\varphi$ is said to be 
of type  $(d_1,d_2)$ if $\varphi$ and $\varphi^{-1}$ are, respectively, defined 
by linear subsystems of $|\O_{\PP^n}(d_1)|$ and  $|\O_{Z}(d_2)|$; 
$\varphi$ is a \emph{quadratic} transformation if $d_1=2$.
The case where $Z\subseteq\PP^N$ is a factorial complete intersection 
is particularly interesting, which, in a sense, generalizes the classical notion of
special Cremona transformations, that is 
$Z=\PP^N$.  

The study of special birational transformations  
with some fixed numerical invariants (for example, 
when fixing the dimension of the base locus, or fixing the type)
is a classical problem and often challenging.  
The first general results were obtained by 
\citet[see also \citeyear{crauder-katz-1991}]{crauder-katz-1989}, who classified 
 all special Cremona transformations whose base locus has dimension at
most two. Soon after, \citet{ein-shepherdbarron} classified special Cremona 
transformations of type $(2,2)$   
as those given by systems of quadrics through Severi varieties. 
More recently, 
in the same spirit, we classified  in (\citeyear{note}) 
 special birational 
transformations of type $(2,2)$ into smooth quadric hypersurfaces as 
those given by systems of quadrics through hyperplane sections of Severi varieties. 
This classification has been extended in two directions: 
by \cite{alzati-sierra-quadratic} when the image $Z$ is an $LQEL$-manifold, and  
by \cite{li-special} when $Z$ is a smooth hypersurface.
In a different direction, \citet{alzati-sierra} 
extended the classification of \citeauthor{crauder-katz-1989} to the case 
when $Z\subseteq\PP^N$ is a prime Fano manifold.
They have also obtained classification results 
in the case when the dimension of the base locus is three, 
except that they have not treated two cases: 
that of quadratic transformations of $\PP^8$, and that of
cubic transformations of $\PP^6$; see \citet[Remark~9]{alzati-sierra} and
\citet[Corollary~1]{crauder-katz-1991}.

Motivated by this and 
some related partial results in \citet{note}, 
 we studied in (\citeyear{note2}) special quadratic transformations 
 into  factorial varieties with $\dim\B\leq3$.
A finite list
of all potential cases was provided.
The main difficulty was to deal with the case of
transformations of $\PP^8$ of type $(2,d)$, $d>1$,
having a nondegenerate threefold as base locus;
specially, it was difficult to exhibit examples  of these transformations.
In some cases,
we had an example of  transformation, but 
we were not able to establish that the image was factorial,  
due to lack of explicit equations.
However, in two cases, 
even the mere existence of the transformation was left as an open problem.
These cases were the following:
\begin{enumerate}[(I)]
\item\label{EsA} a special Cremona transformation of type $(2,5)$
with base locus a threefold scroll of degree $12$ 
over a rational surface $Y$ with $K_{Y}^2=5$; 
\item\label{EsB} a special birational transformation 
of type $(2,4)$ into a quadric hypersurface with base locus
a threefold scroll of degree $11$  over the Hirzebruch surface $\FF_1$.
\end{enumerate}
In the latter case,
the existence of such a scroll over $\FF_1$
was known thanks to \citet{alzati-fania-ruled}, 
but we did not know, for example, if it was cut out by quadrics.

By establishing the existence for case (\ref{EsA}), one also solves 
classification problems about special Cremona transformations.
Indeed, besides a known example due to \citet{hulek-katz-schreyer}, 
the transformation in (\ref{EsA}) can be the only possible 
special quadratic Cremona transformation
having three-dimensional base locus.
Furthermore, 
the special quadratic Cremona transformations having three-dimensional base locus 
are the same as the special Cremona transformations 
of type $(2,5)$. 
Similarly,
by establishing the existence for case  (\ref{EsB}), 
one solves 
classification problems about transformations into hypersurfaces.
See Subsection 
 \ref{subsec: applications}
for more details.

The main result of this paper, Theorem \ref{theoremscrolls}, states the 
existence of four linked special quadratic birational transformations of $\PP^8$
belonging to the 
aforementioned
list   in \cite{note2}.
Two of them are the
sought examples (\ref{EsA}) and (\ref{EsB}), 
while the other two 
were already known, 
but without knowing whether the image was factorial;
we establish the factoriality for these cases 
(see Subsection \ref{subsec: applications} and Table \ref{table: class}).

In the construction of one of the four transformations, incidentally 
we encounter an example of ``almost special'' quadro-quadric 
Cremona transformation of $\PP^{11}$; 
in Section \ref{sec: cremona} we study this example in details. 
Inspired by this, 
we explicitly construct  a quadro-quadric Cremona transformation of $\PP^{20}$, 
which 
should be 
related 
to the rank 3 
         Jordan algebra considered by 
\citet[Examples~5.8 and 5.11]{pirio-russo-cremona}. 
 
 In order to construct all our examples of quadratic transformations, 
 we proceed first to construct 
  a (candidate) restriction map $\psi$ to a general linear subspace
of a certain dimension,  
and then we try to lift $\psi$ 
via a birational map $\widetilde{\psi}$. 
In the cases in which we expect 
 that the base locus 
is a threefold scroll over a surface, 
a well-known result of adjunction theory 
suggests a way to cook up an explicit parameterization of a
  hyperplane section  $S$ of such a scroll; 
 then we  define $\psi$
 by the linear system of quadrics passing through $S$.
Once we have obtained a birational map $\psi$, 
we proceed as follows.
We first determine the ideal $\mathfrak{a}$ 
of the image of $\psi$ and the inverse map $\psi^{-1}$. 
Then, 
by combining the forms defining $\psi^{-1}$
with some generators of $\mathfrak{a}$,
we define a suitable rational map 
that in some way resembles the inverse of the hypothetical map $\widetilde{\psi}$.
We then attempt to invert this map.

The main difficulty is that,
working in characteristic zero and in  large enough projective spaces,  
our initial  maps $\psi$'s already 
involve polynomials 
with huge coefficients and many terms. 
But the 
general standard algorithms 
 often yield  massive amounts of data, 
even with rather simple input data. 
So it is necessary to use
some ad-hoc algorithms  
that allow to reduce the use of data and calculation times. 
In Section \ref{sec: methods}, we describe these algorithms as well as
 some computational tools, 
 which allow us to prove 
 rigorously  why our (human unreadable)
 output data provide the new examples of 
 transformations.
 
 All  
 codes 
described in this paper 
 are 
 written and executed in 
 {\sc Macaulay2}  \citep{macaulay2}, and
all the input and output files  
are located at \url{http://goo.gl/eT4rCR} 
  (an archive containing them all is also attached to the pdf version of this paper). 
The implementations of 
the algorithms in Section \ref{sec: methods} are contained in the file 
\verb!bir.m2!.    

\section{Some computational methods for rational maps between projective spaces}\label{sec: methods}
In this section, we briefly describe the algorithms 
that we will use in the following.   

\subsection{Computing inverse map}\label{russo-simis} 
The most important tool  
we need is 
an efficient way to compute inverses of birational maps 
with source a projective space.
For this,
we compute the inverse of 
a birational map onto its image,
 $\varphi:\PP^n\dashrightarrow\PP^m$, 
using the algorithm described by \citet{russo-simis}; 
although their approach  is not general, 
it works for our examples. 
More explicitly, we use the  following:
\begin{alg}[\texttt{invertBirationalMapRS}]\label{invertBirationalMapRS-algo} 
 \begin{description}\hspace{1pt}
  \item[\texttt{Input:}]  a row matrix $F=(F_0,\ldots,F_m)$ over the polynomial ring 
$k[x_0,\ldots,x_n]$
of the source space,  
representing  the map $\varphi$, 
and an ideal $\mathfrak{a}$ of the polynomial ring $k[y_0,\ldots,y_m]$
of the target space, representing the ideal of the image of $\varphi$.
  \item[\texttt{Output:}] a row matrix $G=(G_0,\ldots,G_n)$ over the polynomial ring
 $k[y_0,\ldots,y_m]$,  
representing (a candidate for)  a lifting to $\PP^m$ of $\varphi^{-1}$.
 \end{description}
\begin{itemize}
 \item   Find
 generators $\{(L_{0,j},\ldots,L_{m,j})\}_{j=1,\ldots,q}$ for   
  the module of linear syzygies of 
  $(F_0,\ldots,F_m)$;
  \item compute the Jacobian matrix $\Theta$ 
of the biforms $\{\sum_{i=0}^m y_i\,L_{i,j} \}_{j=1,\ldots,q}$ 
with  respect to the variables $x_0,\ldots,x_n$
(note that all the 
entries of $\Theta$ are linear forms lying 
in the ring $k[y_0,\ldots,y_m]$), and consider
the map of graded free modules 
$(\Theta\ \mathrm{mod}\ \mathfrak{a}):
\left(k[y_0,\ldots,y_m]/\mathfrak{a}\right)^{n+1}\rightarrow 
\left(k[y_0,\ldots,y_m]/\mathfrak{a}\right)^{q}$;
\item return 
a generator $G$ for the kernel of $(\Theta\ \mathrm{mod}\ \mathfrak{a})$,
seeing it as a row matrix 
over $k[y_0,\ldots,y_m]$. 
\end{itemize}
\end{alg}
While computing the inverse of a map $\varphi$ as above 
requires non-trivial algorithms,
checking that a given map $\eta:\PP^m\dashrightarrow\PP^n$ 
is a lifting of 
the inverse map of $\varphi$ is straightforward. 
\begin{alg}[\texttt{isInverseMap}]\label{isInverseMap-algo} 
 \begin{description}\hspace{1pt}
  \item[\texttt{Input:}] two row matrices representing 
  the maps  $\varphi$ and $\eta$.
 \item[\texttt{Output:}] boolean value according to the condition that $\eta\circ\varphi$ 
 coincides 
 with the identity of $\PP^n$.
 \end{description}
 \begin{itemize}
  \item Determine a row matrix $(H_0,\ldots,H_n)$ of forms defining  the composition 
  $\eta\circ\varphi:\PP^n\dashrightarrow\PP^n$;
  \item put $T=H_0/x_0$, and return \texttt{true} if and only if
  $(H_0,\ldots,H_n)=(x_0\,T,\ldots,x_n\,T)$.
 \end{itemize}
 \end{alg}

\subsection{Checking birationality} \label{subsec: birationality}
One can check the birationality of a given map without going
through calculation of the inverse map.
This can be done via computing the degree.
 Indeed,
if $\varphi: 
\PP^n\dashrightarrow 
\PP^m$
is a rational map
defined by a linear system $\langle F_0,\ldots,F_m\rangle$,
we consider the corresponding affine map
$\varphi_a:\mathbb{A}^{n+1}
\rightarrow \mathbb{A}^{m+1}
$,
and the induced ring map 
$\widetilde{\varphi}:k[y_0,\ldots,y_m]\rightarrow k[x_0,\ldots,x_n]$.
If $\mathfrak{p}\subset k[x_0,\ldots,x_n]$ is the homogeneous ideal 
of a (general) point $p\in\PP^n$, then 
we have 
that 
$\overline{\varphi^{-1}\left(\varphi(p)\right)}$ 
is the projectivized of the cone
$\overline{ {\varphi_a}^{-1}\left(\varphi_a\left(V(\mathfrak{p})\right)\right) 
\setminus V(F_0,\ldots,F_m) }$, and this is defined 
by the saturation ideal 
$$\varphi_{\mathfrak{p}}:=\left(\widetilde{\varphi}\left({\widetilde{\varphi}}^{-1}(\mathfrak{p})\right)\right):
{\left(F_0,\ldots,F_m\right)}^{\infty} .$$
So we can compute the degree of $\varphi$ 
by computing the degree and the dimension of $V(\varphi_{\mathfrak{p}})\subset\PP^n$.
That is, we have the following: 
\begin{alg}[\texttt{degreeOfRationalMap}]\label{degreeOfRationalMap-algo} 
 \begin{description} Probabilistic approach. 
  \item[\texttt{Input:}] row matrix representing a rational map $\varphi:\PP^n\dashrightarrow\PP^m$.
 \item[\texttt{Output:}] the degree of $\varphi$.
 \end{description}
 \begin{itemize}
  \item Pick up a random point $p=V(\mathfrak{p})$ on $\PP^n$ (i.e., take $n$ random linear forms on $\PP^n$);
  \item compute the above defined ideal $\varphi_{\mathfrak{p}}$;
  \item if $\dim V(\varphi_{\mathfrak{p}})>0$ return $0$, else return $\deg V(\varphi_{\mathfrak{p}})$.
 \end{itemize}
 \end{alg}
The strength of this algorithm  is that {\sc Macaulay2}  
computes quotient and saturation of ideals generally without difficulty.  
The approach is probabilistic because 
 we could not get the right answer when 
 $p$ 
 belongs to a certain closed subset $C\subsetneq\PP^n$.
One can  get a non-probabilistic version, by 
performing the computation with a 
generic (i.e., symbolic) point $p$,
but this seems very slow.
\begin{rem}\label{remark: projectiveDegrees}
Note that
Algorithm \ref{degreeOfRationalMap-algo}
can be 
adapted in an obvious way to obtain 
a probabilistic algorithm 
for the calculation of all projective degrees of a rational map, 
not only from a projective space 
\citep[for the definitions see][Example~19.4]{harris-firstcourse}.
We have roughly implemented this with
     the routine \verb!projectiveDegrees!. 
A non-probabilistic (but more expensive) 
way to do this is described  
by \citet{aluffi}. 
\end{rem}
\subsection{Computing ideal of the image}\label{quadricsInImage} 
Another  useful remark is that we
can
 compute the ideal of the image of a rational map 
with source a projective space,
without going
through calculation of the kernel of the corresponding ring map,
at least when we know that the ideal 
is generated in degree less than some small integer.
We can proceed as in the following: 
\begin{alg}[\texttt{homogPartOfImage}]\label{homogPartOfImage-algo} 
 \begin{description}\hspace{1pt}
  \item[\texttt{Input:}] the corresponding ring map 
of a rational map 
$\varphi=
[F_0,\ldots,F_m]:\PP^n\dashrightarrow\PP^m$, 
and 
an integer $d$.
 \item[\texttt{Output:}] a basis of the vector space  $H^0(\PP^m,\mathfrak{I}(d))$,
where $\mathfrak{I}$ denotes the ideal sheaf of the 
image.
 \end{description}
 \begin{itemize}
  \item Take a generic homogeneous degree $d$ polynomial 
$G(y_0,\ldots,y_m)=\sum_{|I|=d}a_I\mathbf{y}^I$;
\item substitute 
the $y_i$'s with the $F_i$'s, so obtaining a homogeneous polynomial $\widetilde{G}$ 
in the ring of $\PP^n$, which
 vanishes identically if and only if $G$ lies in $H^0(\PP^m,\mathfrak{I}(d))$;
 \item solve the  
  homogeneous linear system in the $a_I$'s given by the coefficients 
 of $\widetilde{G}$, finding a basis of solutions;
 \item return the homogeneous polynomials
 in $y_0,\ldots,y_m$
 corresponding to this basis,
 by reinterpreting the elements of the basis
  as lists of coefficients of $G$.
 \end{itemize}
 \end{alg}
 This method may turn out to be much faster than computing  
the kernel 
of the corresponding ring map. 
This is for example the case for the maps of Theorem \ref{theoremscrolls}, 
see Subsection~\ref{Subsection appendix Image phij}. 

\subsection{Computing base locus of the inverse map}\label{baselocusinversa} 
For birational maps, like those returned by 
Algorithm \ref{invertBirationalMapRS-algo},
we can determine the base locus of the inverse map
via a heuristic method, without computing the inverse map. Indeed, 
consider the special case, the only one we are interested in, in which we have a
rational map  $\eta:\PP^N\dashrightarrow\PP^n$ 
 defined by a linear system of hypersurfaces of degree $d$, and 
 $Z\subset\PP^N$ is an $n$-dimensional subvariety such that the restriction 
$\eta|_{Z}:Z\dashrightarrow\PP^n$ is a birational map of type $(d,2)$. 
If $H\subset\PP^N$ is a general hyperplane, then 
the image of $H\cap Z$ via $\eta$ is a general member of the linear system defining 
the inverse map $\varphi={\eta|_{Z}}^{-1}$.
Thus, by considering the images of $N+1$ general hyperplane sections 
$H_i\cap Z\subset\PP^N$, $i=0,\ldots, N$, 
we obtain $N+1$ quadric hypersurfaces $Q_i\subset\PP^n$ such that the  
intersection $\bigcap_{i=0}^N Q_i$ is the base locus $\B\subset\PP^n$ of $\varphi$.
Even better, one can  
compute just the images 
of linear sections obtained by intersecting $Z$ with 
general subspaces of higher codimension $c>1$. So, 
when $c$ 
is not too high (more precisely, when $c$
 is less than the codimension  of $\B$),
one determines 
intersections $\bigcap_{j=1}^{c} Q_j$ of $c$ general members 
of the linear system defining $\varphi$ and can obtain $\B$ by intersecting them.
These considerations yield the following: 
\begin{alg}[\texttt{baseLocusOfInverseMap}]\label{baseLocusOfInverseMap-algo} 
 \begin{description}\hspace{1pt}
  \item[\texttt{Input:}] an ideal representing the subvariety $Z\subset \PP^N$,
a ring map representing the rational map $\eta:\PP^N\dashrightarrow \PP^n$,
and a list of ideals representing a sufficiently large set of sufficiently 
    general linear subspaces of $\PP^N$.
 \item[\texttt{Output:}] an ideal, candidate to be 
the ideal of the base locus $\B$ of $\varphi$. 
 \end{description}
 \begin{itemize}
\item For each given subspace $L\subset\PP^N$ compute a basis of 
 $H^0(\PP^n,\mathfrak{I}(2))$,
where $\mathfrak{I}$ denotes the ideal sheaf of the 
image of the  restriction map 
$\eta|_{Z\cap L}:Z\cap L\dashrightarrow\PP^n$ (use Algorithm \ref{homogPartOfImage-algo} if $Z=\PP^N$); 
\item return the ideal generated by all these quadratic forms.
\end{itemize}
\end{alg}

 \subsection{Checking smoothness of the base locus}\label{smoothnessBaseLocus}
We can check if a 
closed 
equidimensional 
subscheme 
$V(I)\subset\PP^n_{\QQ}$ 
is smooth over $\mathbb{Q}$ 
(equivalently, over  the algebraic closure of $\mathbb{Q}$) 
by reducing to prime characteristic.

More precisely,
if
$I$
is 
generated by forms $F_0,\ldots,F_m\in\ZZ[x_0,\ldots,x_n]$,  
consider the  proper morphism $f:\mathrm{Proj}\left(\ZZ[x_0,\ldots,x_n]/I\right)
\rightarrow\mathrm{Spec}(\ZZ)$.   
From  Chevalley's upper semicontinuity theorem \citep[IV~13.1.5]{EGA}, 
  for all but finitely many 
 closed points $\mathfrak{p}\in\mathrm{Spec}(\ZZ)$, we
 have $\dim f^{-1}(\mathfrak{p})=\dim f^{-1}(\mathfrak{0})$, where $\mathfrak{0}$
 denotes the generic point of $\mathrm{Spec}(\ZZ)$.  
 Let 
 $c=n-\dim f^{-1}(\mathfrak{0})$ and
denote by $J\subset \ZZ[x_0,\ldots,x_n]$ 
the ideal generated by all 
minors of order $c$ of the Jacobian matrix $\left(\partial F_i/\partial x_j \right)_{i,j}$
together with the forms $F_i$'s. 
 Again we have a proper morphism 
 $g:\mathrm{Proj}\left(\ZZ[x_0,\ldots,x_n]/J\right)
\rightarrow\mathrm{Spec}(\ZZ)$, and we have that 
$\dim g^{-1}(\mathfrak{0})= -1$ 
if and only if $f^{-1}(\mathfrak{0})$ is a smooth scheme.
So once again from Chevalley's upper semicontinuity theorem, it follows that 
if for some $\mathfrak{p}\in\mathrm{Spec}(\ZZ)$ we have
$\dim g^{-1}(\mathfrak{p})= -1$, then we have this 
for all but finitely many $\mathfrak{p}\in\mathrm{Spec}(\ZZ)$, 
and $f^{-1}(\mathfrak{0})$ is a smooth scheme. 

Thus we have the following:
\begin{alg}[\texttt{isSmooth}]\label{isSmooth-algo} 
 \begin{description}\hspace{1pt}
  \item[\texttt{Input:}] an ideal 
$I=(F_0,\ldots,F_m)\subset \ZZ[x_0,\ldots,x_n]$ as above, 
and a prime number  
$p$.
 \item[\texttt{Output:}] a boolean value according to the condition of being smooth   
for  
$V(I)\subset \PP^n_{\QQ}$.
 \end{description}
 \begin{itemize}
  \item Compute the codimension $c$ of the scheme $V(I)$ over $\QQ$;
  \item replace $I$ with $(I\,\mathrm{mod}\ p)$;
  \item compute the ideal $J$ generated by all 
minors of order $c$ of the Jacobian matrix $\left(\partial F_i/\partial x_j \right)_{i,j}$
together with the forms $F_i$'s;
\item compute the codimension $e$ of $V(J)$ (as scheme over $\ZZ/(p)$);
\item if $e>n$ then return \texttt{true}, 
else (print a message with a suggestion to) choose another prime $p$.  
 \end{itemize}
 \end{alg}
\begin{rem}
 With our implementation of Algorithm \ref{isSmooth-algo}, we 
 can pass the optional argument  \verb!Use=>Sage! to transfer the computation to
the software system Sage \citep{sagemath}, because it seems that the latter 
is much faster to perform this type of computations.
We also pass the prime $p$ via the optional argument \verb!reduceToChar=>p!.
\end{rem}

\subsection{Checking irreducibility of the base locus}\label{absoluteConnection}
All our objects will be defined 
over the rational field 
$\QQ$,\footnote{%
Actually, all our objects will be defined 
over $\mathbb{Z}$, and one may think them defined over $\ZZ/(p)$, where $p$ is a 
prime.}
where we can do the exact calculations,
but we will be interested in their properties over the 
complex field $\CC$. Of course, the property of irreducibility does not transfer from $\QQ$ 
to $\CC$. However, 
one can check if a smooth 
subscheme $X\subset \PP^n_{\QQ}$  is irreducible over $\CC$,
by computing the number of connected components   
 over the algebraic closure of $\QQ$, that is   
   the dimension over $\QQ$ of $H^0(X,\O_X)$.
   Fortunately, {\sc Macaulay2} is well able to perform these calculations,
   thereby making trivial the 
 implementation of the following:
 \begin{alg}[\texttt{numberConnectedComponents}]\label{numberConnectedComponents-algo} 
 \begin{description}\hspace{1pt}
  \item[\texttt{Input:}] homogeneous ideal $I$ 
  of, for example, a smooth subscheme $X\subset\PP^n$ over $\QQ$;
 \item[\texttt{Output:}] the number of connected components of $X$ over $\overline{\QQ}$.
 \end{description}
 \begin{itemize}
  \item Compute the $\QQ$-vector space $H^0(X,\O_X)$ and return its dimension.
 \end{itemize}
 \end{alg}
 \subsection{Checking factoriality of the image}\label{subsec: factoriality}
For the special birational transformations we consider in this paper,
checking that
the image of the transformation is a factorial variety
is reduced to a simple computational verification. Indeed,
  from Grothendieck's parafactoriality theorem 
 \citep[see][XI, 3.14]{sga2},
 an $n$-dimensional local complete intersection  is factorial whenever
 the dimension of its singular locus 
 is less than $n-3$.
 Of course, complete intersections 
are also local complete intersections.

Furthermore, it is worth observing that
  cones over 
  Grassmannians (embedded by their Pl\"ucker embedding) are always factorial varieties.
  This follows from the following more general result.    
 \begin{prop}\label{proposition: factoriality} Let $Z\subset\PP^n$ be a smooth projectively normal variety 
with $\mathrm{Pic}\,Z\simeq\mathbb{Z}\langle \mathcal{O}_Z(1)\rangle$.
Denote by $Z_i\subset\PP^{n+i+1}$ the cone over $Z$ of vertex a linear 
space $L_i\subset \PP^{n+i+1}\setminus \PP^n$ of dimension $i$ ($Z=Z_{-1}$); in other words,
 $Z_i$ is  the classical projective cone over $Z_{i-1}\subset \PP^{n+i}$ in $\PP^{n+i+1}$.
Then $Z_i$ is a factorial variety. 
\end{prop}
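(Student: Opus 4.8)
The plan is to use the fact that a variety is factorial precisely when all of its local rings are UFDs, together with the observation that regular local rings are automatically UFDs. Since $Z$ is smooth, $Z_i$ can be singular only along its vertex $L_i$: the linear projection from $L_i$ exhibits $Z_i\setminus L_i$ as a Zariski-locally trivial bundle over the smooth variety $Z$ with smooth fibres, so every local ring at a point of $Z_i\setminus L_i$ is regular and hence a UFD. The entire problem therefore concentrates at the points of the vertex $L_i$. (Note that Grothendieck parafactoriality, used elsewhere in this section, does not apply directly here, since $Z_i$ need not be a local complete intersection and the codimension of $\mathrm{Sing}(Z_i)=L_i$ is in general too small.)

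First I would handle the vertex points through affine charts. Choosing coordinates so that $\PP^n=\{x_{n+1}=\cdots=x_{n+i+1}=0\}$ and $L_i=\{x_0=\cdots=x_n=0\}$, the homogeneous ideal of $Z_i$ is generated by generators $F_0,\ldots,F_m$ of the (saturated) ideal of $Z$, none of which involves the variables $x_{n+1},\ldots,x_{n+i+1}$. Consequently, in the standard chart $\{x_k\neq0\}$ with $k\in\{n+1,\ldots,n+i+1\}$ one finds an isomorphism $Z_i\cap\{x_k\neq0\}\cong\widehat{Z}\times\mathbb{A}^{i}$, where $\widehat{Z}=V(F_0,\ldots,F_m)\subset\mathbb{A}^{n+1}$ is exactly the affine cone over $Z$, i.e. $\mathrm{Spec}(R)$ for the homogeneous coordinate ring $R$ of $Z$. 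Every point of $L_i$ lies in such a chart, and its local ring is a localization of $R[t_1,\ldots,t_i]$. Since a polynomial extension of a UFD is a UFD and a localization of a UFD is a UFD, it suffices to prove that $R$ itself is a UFD.

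The crux, and the step I expect to be the main obstacle, is thus the factoriality of the affine cone $\widehat{Z}=\mathrm{Spec}(R)$. Projective normality of $Z$ gives that $R$ is integrally closed, so $\widehat{Z}$ is a normal Noetherian domain, and by the standard criterion it is a UFD if and only if its divisor class group vanishes. Here I would invoke the well-known description of the class group of a cone. The vertex (the origin) of $\widehat{Z}$ has codimension $\dim Z+1\geq2$, so $\mathrm{Cl}(\widehat{Z})=\mathrm{Cl}(\widehat{Z}\setminus\{0\})$; the punctured cone is isomorphic to the complement of the zero section in the total space of $\mathcal{O}_Z(-1)$, which yields the localization sequence for the class group
\[ \mathbb{Z}\xrightarrow{\,1\mapsto[\mathcal{O}_Z(1)]\,}\mathrm{Pic}(Z)\longrightarrow\mathrm{Cl}(\widehat{Z})\longrightarrow0, \]
the left-hand map recording the class of the zero section. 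By the hypothesis $\mathrm{Pic}\,Z\simeq\mathbb{Z}\langle\mathcal{O}_Z(1)\rangle$ this map is surjective, so $\mathrm{Cl}(\widehat{Z})=0$ and $R$ is a UFD.

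Combining the two halves — regular (hence factorial) local rings off $L_i$, and localizations of the UFD $R[t_1,\ldots,t_i]$ at the points of $L_i$ — every local ring of $Z_i$ is a UFD, so $Z_i$ is factorial. Essentially all of the difficulty lies in the class-group computation for $\widehat{Z}$; the chart identification and the permanence of the UFD property under polynomial extension and localization are routine. One should also record the trivial degenerate case $\dim Z=0$, where $Z_i$ is a linear space, to ensure the codimension bound $\dim Z+1\geq2$ used above is genuinely in force.
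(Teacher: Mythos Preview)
Your argument is correct and follows essentially the same route as the paper: split into smooth points (regular local rings) and vertex points, identify the local ring at a vertex point as a localization of a polynomial extension of the homogeneous coordinate ring $R=S(Z)$, and conclude once $R$ is known to be a UFD. The only substantive difference is that the paper simply asserts ``the conditions on $Z$ imply that $S(Z)$ is a UFD'' with a pointer to \citet[Exercise~II~6.3]{hartshorne-ag}, whereas you actually carry out that exercise via the class-group sequence for the punctured affine cone; this makes your write-up more self-contained but not genuinely different in strategy.
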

\begin{proof}\citep[See also][Exercise~II~6.3]{hartshorne-ag}. 
Denote by $S(Z)$ and $S(Z_i)$ the homogeneous coordinate rings of, respectively, $Z$ and $Z_i$.
The conditions on $Z$ imply the fact that $S(Z)$ is a UFD.
Since $S(Z_i)$ is isomorphic to
the polynomial ring in one variable with coefficients in $S(Z_{i-1})$,
we see by induction that, for every $i$, $S(Z_i)$ is a UFD.
Now let $p\in Z_i$ and let $\mathcal{O}_{p,Z_i}$ denote the local ring 
of $p$ on $Z_i$. If $p\notin L_i$, then $p$ is a smooth point of $Z_i$ and  
so $\mathcal{O}_{p,Z_i}$ is a UFD. 
If $p\in L_i$,  then $\mathcal{O}_{p,Z_i}$  is isomorphic to  the localization of $S(Z_{i-1})$ 
 at the maximal ideal generated by all homogeneous elements of positive degree, 
 and so, again, it is a UFD.
\end{proof}
\section{Transformations coming from threefold scrolls over surfaces}\label{sec: scrolls}
In this section we prove our main result:
 \begin{thm}\label{theoremscrolls}
  Let $j=0,1,2,3$. There exists a special birational transformation 
$\varphi_j$ of type $(2,2+j)$ 
from $\PP^8$ into a factorial complete intersection of $3-j$ quadrics in $\PP^{11-j}$, 
whose base locus 
is  a  linearly normal threefold scroll $\B_j=\PP_{Y_j}(\E_j)\subset\PP^8$ of degree $9+j$ 
 and sectional genus $3+j$ 
over a surface $Y_{j}$, 
where $Y_0=\PP^2$, $Y_1=Q^2$ is the quadric surface, $Y_2=\FF_1$ is the Hirzebruch surface, 
and 
 $Y_3$ is a rational surface with $K^2=5$.
 \end{thm}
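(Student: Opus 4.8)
The plan is to construct each $\varphi_j$ explicitly by computer algebra and to certify all of its properties by means of the algorithms of Section \ref{sec: methods}, treating the four cases $j=0,1,2,3$ as a single family with increasing invariants. The starting point is that a threefold scroll $\B_j=\PP_{Y_j}(\E_j)\subset\PP^8$ is determined by the pair $(Y_j,\E_j)$, and that a general hyperplane section $S_j=\B_j\cap\PP^7$ is again a scroll surface of degree $9+j$ and sectional genus $3+j$. First I would produce an \emph{explicit} rational parameterization of $S_j\subset\PP^7$: exploiting the standard result of adjunction theory alluded to in the introduction, I would realize $S_j$ as the image of a ruled surface under the map given by a suitable very ample linear system, the numerical data fixing this system. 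Having $S_j$ at hand, I would define the candidate restriction map $\psi_j$ on a general $\PP^7\subset\PP^8$ by the linear system of quadrics through $S_j$, that is, by a basis of $H^0(\PP^7,\I_{S_j}(2))$.

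The second step is to lift $\psi_j$ to all of $\PP^8$. Using Algorithm \ref{homogPartOfImage-algo} I would compute the ideal $\mathfrak{a}_j$ of the image of $\psi_j$ in degree two, and using Algorithm \ref{invertBirationalMapRS-algo} I would produce a candidate inverse $\psi_j^{-1}$, confirming that $\psi_j$ is birational onto its image by computing its degree with Algorithm \ref{degreeOfRationalMap-algo} or directly with Algorithm \ref{isInverseMap-algo}. To pass from this $\PP^7$-picture to $\PP^8$, I would assemble a rational map on $\PP^8$ by combining the forms defining $\psi_j^{-1}$ with generators of $\mathfrak{a}_j$, so as to mimic the inverse of the sought transformation, and then invert it again with Algorithm \ref{invertBirationalMapRS-algo}. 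This yields a candidate $\varphi_j\colon\PP^8\dashrightarrow\PP^{11-j}$, whose birationality onto its image $Z_j$ I would certify with Algorithm \ref{isInverseMap-algo}; its base locus $\B_j$ is then recovered as the common zero locus of the defining quadrics, or, on the inverse side, via Algorithm \ref{baseLocusOfInverseMap-algo}.

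It then remains to verify the geometry. I would check that $\B_j$ is smooth by reduction to a well-chosen prime characteristic (Algorithm \ref{isSmooth-algo}) and irreducible over $\CC$ by confirming $\dim_{\QQ}H^0(\B_j,\O_{\B_j})=1$ (Algorithm \ref{numberConnectedComponents-algo}); I would read off its degree and sectional genus to match $9+j$ and $3+j$, and identify the scroll structure over $Y_j$ from the numerical invariants, the rational surface $Y_3$ with $K_{Y_3}^2=5$ being the most delicate to pin down. On the image side, I would check that $Z_j\subset\PP^{11-j}$ has codimension $3-j$ and is cut out by exactly $3-j$ quadrics, so that it is a complete intersection, in particular a local complete intersection of dimension $8$.

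Finally, factoriality of $Z_j$ follows from Grothendieck's parafactoriality theorem, in the form used for Proposition \ref{proposition: factoriality}, as soon as I bound $\dim\mathrm{Sing}(Z_j)<8-3=5$; computing $\mathrm{Sing}(Z_j)$ explicitly is the verification I expect to be the most delicate, since it is the crux of the factoriality claim. The overarching obstacle, however, is computational feasibility: over $\QQ$ and in these large projective spaces the quadrics defining $\psi_j$ and $\varphi_j$ already carry huge coefficients, and naive Gr\"obner-basis computations of inverses, image ideals, and singular loci are hopeless. The construction therefore hinges on the tailored routines of Section \ref{sec: methods} --- computing image ideals degree by degree, lifting inverses through linear syzygies, and transferring smoothness tests to prime characteristic --- so that the resulting (humanly unreadable) output can nonetheless be certified rigorously.
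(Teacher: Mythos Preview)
Your proposal is essentially the same computer-algebra strategy as the paper's, and the overall architecture---build $S_j\subset\PP^7$ from adjunction-theoretic data, define $\psi_j$ by quadrics through $S_j$, lift to $\PP^8$ via the inverse and the image equations, then certify everything with the routines of Section~\ref{sec: methods}---matches Subsections~\ref{sec construction}--\ref{sec proof} closely.

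Two points of comparison are worth noting. First, your description of $S_j$ as ``the image of a ruled surface'' is not quite how the paper proceeds: by Fact~\ref{fact-beltrametti-sommesse}, $S_j$ is the blow-up of $Y_j$ at $c_2(\E_j)$ points, and the paper constructs it by parameterizing $Y_j$ in a large projective space via $\det\E_j$ and then performing $s_j$ inner projections; getting these numerics ($s_0=7$, $s_1=8$, $s_2=10$, $s_3=8$) right is what makes the construction go through. Second, the lifting from $\PP^7$ to $\PP^8$ is \emph{not} uniform in the paper: your recipe (combine $\psi_j^{-1}$ with generators of $\mathfrak{a}_j$ and invert) is roughly what happens for $j=3$, but for $j=2$ the paper parameterizes the quadric $Z_2$ to reduce to a Cremona of $\PP^8$, and for $j=0$ it passes through a quadro-quadric Cremona of $\PP^{11}$; these detours are forced by feasibility, not by principle. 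Finally, the paper sidesteps your proposed direct verification of the scroll structure over $Y_j$ by invoking the classification in \citet{note2}: once $\B_j$ is shown to be a smooth irreducible threefold of degree $9+j$ and $\varphi_j$ is a special quadratic birational map into a factorial complete intersection of $3-j$ quadrics, the abstract structure of $\B_j$ is forced. This is a genuine simplification over trying to identify $Y_j$ and $\E_j$ from the equations.
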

We prove Theorem \ref{theoremscrolls} providing  
an algorithm producing the explicit 
rational maps $\varphi_j$'s, 
 and then we will can show everything 
 reducing to computational verifications. 
The algorithm has been implemented in {\sc Macaulay2}, 
dividing the input code into
three 
files, \verb!INPUT1!, \verb!INPUT2!, \verb!INPUT3!, 
together 
with the file named \verb!bir.m2!. 
From each input it comes out a respective output file
and 
the final, 
 named \verb!equationsOfBj.m2!,
 contains 
the maps, the 
inverses, and
the equations of the images. 
\subsection{Construction of the transformations}\label{sec construction} 
\subsubsection{Construction of restriction maps}
In this subsubsection, we describe the algorithm implemented in the 
 file \verb!INPUT1!, which determines 
the equations of smooth surfaces, good candidates 
to be general hyperplane sections of the desired scrolls.
In other words, 
we describe how to construct, for $j=0,1,2,3$, a smooth surface $S_j\subset\PP^7$  
of degree $9+j$ and cut out by $12-j$ quadrics. 
Running this code on our machine requires a few minutes.

To begin with, we recall the following well-known result of adjunction theory 
\citep[see for example][]{beltrametti-sommese}. 
\begin{fact}\label{fact-beltrametti-sommesse}
Let $X=\PP_Y(\E)$ be a projective smooth threefold, embedded as a linear scroll 
over a smooth surface $Y$, where $\E$ is a rank $2$ vector bundle on $Y$, 
and let $p:X\rightarrow Y$ denote the structural morphism.
If 
$S$ is a smooth hyperplane section of $X$, 
then 
the restriction $p|_S$ of $p$ to $S$ is the reduction map of $(S,H|_S)$ 
  to $(Y,\det\E)$, where $\det\E$ is very ample and the number 
  of points blown up by $p|_S$ 
  is $c_2(\E)$. 
\end{fact}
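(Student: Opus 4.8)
The plan is to realize $p|_S$ concretely and then match it against the characterization of the first reduction from adjunction theory. Throughout I write $\xi=\O_X(1)$ for the tautological class giving the scroll embedding, so that $H=\xi$, the fibers $F_y=p^{-1}(y)$ are the lines of the scroll, and, under the Grothendieck convention $p_*\O_X(1)=\E$, a hyperplane section $S=X\cap H$ corresponds to a section $\sigma\in H^0(Y,\E)$. The first thing I would record is the relative canonical bundle formula $K_{X/Y}=-2\xi+p^*\det\E$ for the $\PP^1$-bundle $X=\PP_Y(\E)$, whence $K_X=-2\xi+p^*(K_Y+\det\E)$.

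First I would show that $p|_S\colon S\to Y$ is a birational morphism between smooth surfaces. Since a general fiber $F_y$ is a line meeting the hyperplane $H$ in a single reduced point, $p|_S$ has degree one, hence is birational; it is a morphism because $p$ is. The curves it contracts are exactly the fibers $F_y$ that lie entirely in $S$, and $F_y\subset S$ precisely when $\sigma(y)=0$. For general $\sigma$ the zero scheme of $\sigma$ consists of $c_2(\E)$ reduced points, so $p|_S$ contracts exactly $c_2(\E)$ fibers, each a smooth rational curve with $H\cdot F_y=1$. Because $p|_S$ is a birational morphism of smooth surfaces contracting these curves to simple points $y$, each such $F_y$ is a $(-1)$-curve blown up exactly once; this already yields the asserted count of $c_2(\E)$ blown-up points.

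Next I would run the adjunction computation. Since $\O_X(S)=\xi$, adjunction gives $K_S=(K_X+\xi)|_S$, and substituting the formula for $K_X$ yields
\[
K_S+H|_S=\left(p^*(K_Y+\det\E)\right)|_S=(p|_S)^*(K_Y+\det\E).
\]
This is precisely the relation characterizing the first reduction $\rho$ of a polarized surface $(S,L)$, here with $L=H|_S$, with target surface $Y$ and $K_Y+L'=K_Y+\det\E$, so that $L'=\det\E$. Together with the fact that each contracted fiber satisfies $L\cdot F_y=1$ (it is a line for $L$), this identifies $p|_S$ with the reduction map of $(S,H|_S)$ to $(Y,\det\E)$.

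The main obstacle is the positivity statement: to know that $p|_S$ is genuinely the first reduction one must check that $K_Y+\det\E$ is ample, so that no further curves get contracted, and separately that $\det\E$ itself is very ample. For ampleness of $K_Y+\det\E$ I would invoke the adjunction-theoretic classification of Beltrametti--Sommese, ruling out the short list of special pairs for which the adjoint of a very ample $L=H|_S$ fails to descend to an ample class; since $L$ is very ample and $(S,L)$ is a hyperplane section of a genuine scroll, none of these exceptional cases occurs. For very ampleness of $\det\E$ I would use the Pl\"ucker picture: the morphism $Y\to\GG(1,N)$ sending $y$ to the line $F_y\subset\PP^N$ is a closed embedding, since distinct fibers are distinct lines and the smoothness of $X$ forces it to be an immersion, while the pullback of the Pl\"ucker $\O(1)$ along this map is $\det\E$; hence $\det\E$ is very ample. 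Assembling these pieces gives all three assertions of the statement.
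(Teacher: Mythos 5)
The paper offers no proof to compare against: Fact~\ref{fact-beltrametti-sommesse} is quoted as a known result of adjunction theory with a citation to Beltrametti--Sommese, so your argument can only be measured against the literature. Your computational core is correct and is indeed the standard derivation: $K_{X/Y}=-2\xi+p^{*}\det\E$, hence $K_S+H|_S=(p|_S)^{*}(K_Y+\det\E)$; degree one of $p|_S$ from $\xi\cdot F_y=1$; contracted curves are exactly the fibers over zeros of $\sigma$, and irreducibility of the fibers of $p$ forces each contracted fiber to be the exceptional curve of a single simple blow-up; and the Pl\"ucker argument for very ampleness of $\det\E$ is fine, except that the immersion property of $Y\to\GG(1,N)$ comes from $X$ being \emph{embedded} in $\PP^N$ (distinct and infinitesimally separated fibers), not from smoothness of $X$ alone. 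One smaller gap: you obtain the count $c_2(\E)$ only ``for general $\sigma$,'' whereas the statement concerns an arbitrary smooth hyperplane section. The repair is that smoothness of $S$ itself forces transversality: locally $S=\{a(u,v)s+b(u,v)t=0\}$, and $S$ is smooth along $F_y$ if and only if the Jacobian of $\sigma=(a,b)$ at $y$ is invertible, so $Z(\sigma)$ is automatically reduced (and $0$-dimensional, a point you also leave implicit: if $\sigma$ vanished along a curve $C$, then $S\supset p^{-1}(C)$ would contradict irreducibility of the smooth connected hyperplane section). Alternatively, compute $K_S^2=K_Y^2-c_2(\E)$ directly from the Grothendieck relation.

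The substantive problem is your ``main obstacle'' paragraph. The claim that $K_Y+\det\E$ is ample because ``none of these exceptional cases occurs'' is false in general: take $\E=\O_{\PP^2}(1)^{\oplus 2}$, so that $X=\PP^2\times\PP^1\subset\PP^5$ is the Segre threefold, a linear scroll over $Y=\PP^2$ with $c_2(\E)=1$. Here $S$ is the cubic scroll $\FF_1\subset\PP^4$ --- itself a scroll over a curve, squarely on the exceptional list you invoke --- and $K_Y+\det\E=\O_{\PP^2}(-1)$ is not ample, so the ruling-out step collapses. The statement survives because the Beltrametti--Sommese notion of reduction does not require the adjoint bundle on the target to be ample: a reduction of $(S,L)$ is a morphism $\pi:S\to Y$ expressing $S$ as the blow-up of $Y$ at a finite reduced set, with $L'$ ample and $K_S+L=\pi^{*}(K_Y+L')$ (equivalently $L=\pi^{*}L'-\sum E_i$). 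Your earlier steps verify exactly these conditions, with $L'=\det\E$ ample by the Pl\"ucker argument; so the correct fix is to delete the ampleness detour, not to patch it. Do note what is lost: without ampleness of $K_Y+\det\E$ you cannot argue, as one usually does, that any $(-1)$-curve $E$ with $H\cdot E=1$ satisfies $(K_Y+\det\E)\cdot p(E)=0$ and hence must be one of the contracted fibers; so uniqueness of the reduction (the force of the word ``the'' in the statement) holds in the regime where $K_S+H|_S$ is nef and big, and in degenerate cases like the Segre example it must be checked directly (there $C_0$ is the only $(-1)$-curve on $\FF_1$). For the scrolls actually used in Theorem~\ref{theoremscrolls} the adjoint is ample, so this subtlety is harmless in the paper's applications.
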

From this it follows that if  
the scroll $\B_j=\PP_{Y_j}(\E_j)\subset\PP^8$ 
in the statement of Theorem \ref{theoremscrolls} exists, then 
the surface $Y_j$ is embedded in $\PP^{7+c_2(\E_j)}$ by $\det\E_j$ as a surface 
of degree $9+j+c_2(\E_j)$,
and it is the reduction of 
the general 
hyperplane section $S_j\subset\PP^7$ of $\B_j$.  
But one can also deduce that, putting $s_j=c_2(\E_j)$, necessarily we must have 
 $s_0=7$, $s_1=8$, $s_2=10$, $s_3=8$; so we do not have much choice for the
 embedding of $Y_j$ and 
 construct 
 one of its parameterizations  $\nu_j$
 as follows: 
\begin{itemize}
\item For $j=0$, we simply take the $4$-tuple embedding $\nu_0$ of $\PP^2$ in $\PP^{14}$ 
as a surface of degree $16$.
\item For $j=1$, we take a parameterization of a smooth quadric surface 
$\PP^2\dashrightarrow Q^2\subset\PP^3$ and compose it with the $3$-tuple embedding 
of $\PP^3$ in $\PP^{19}$; so we obtain a birational map
$\nu_1:\PP^2\dashrightarrow\PP^{19}$ parameterizing 
a surface of degree $18$ 
in $\PP^{15}\subset\PP^{19}$.
\item For $j=2$, we take a parameterization $\nu_2:\PP^2\dashrightarrow\PP^{17}$ 
of the Hirzebruch surface 
$\FF_1$ embedded in $\PP^{17}$ by $3C_0+5f$ as a surface of degree $21$;
this is given by the linear system
of all quintic curves containing a triple point in $\PP^2$.
\item For $j=3$, we take a parameterization  of the del Pezzo surface 
of degree $5$ in $\PP^5$, which is 
given by the linear system 
of cubic curves passing through $4$ points in general position in $\PP^2$,
and compose it with the $2$-tuple embedding 
of $\PP^5$ in $\PP^{20}$; so we obtain a birational 
map $\nu_3:\PP^2\dashrightarrow\PP^{20}$ parameterizing 
a surface of degree $20$ 
in $\PP^{15}\subset\PP^{20}$.
\end{itemize}
The next step is to project these surfaces from $s_j$ sufficiently general points. 
That is, for $j=0,1,2,3$,
we consider the composition of the parameterization 
$\nu_j$ 
with a sequence of $s_j$ general inner projections.
Equivalently, we pick up $s_j$ general points $p_{j}^1,\ldots,p_{j}^{s_j}$ on the source space 
of $\nu_j$ 
and compose $\nu_j$ with the restriction to 
$Y_j\subset \PP^{7+s_j}$ of the projection from the linear span 
$\langle \nu_j(p_{j}^1),\ldots ,\nu_j(p_{j}^{s_j}) \rangle\subset\PP^{7+s_j}$.  
Once this is done, we obtain a map   $\psi_j:\PP^2\dashrightarrow\PP^7$ 
and determine its image $S_j\subset\PP^7$. 
If
the picked up points $p_{j}^1,\ldots,p_{j}^{s_j}$ were  sufficiently general,
we now have a smooth (irreducible) surface $S_j\subset\PP^7$   
of degree $9+j$,
sectional genus $3+j$,
and cut out by $12-j$ quadrics. 
This is easily verifiable if one uses Algorithm \ref{isSmooth-algo} for 
the smoothness condition.

As 
different choices of points $p_{j}^1,\ldots,p_{j}^{s_j}$ 
  yield in general different $S_j$'s, 
 we have chosen them in order to obtain, as far as possible,
 more manageable coefficients in the equations of $S_j$.
 The ideals  
 of our four $S_j$'s are stored in the file \verb!equationsOfSj.m2!,
\subsubsection{Inverses of the restriction maps}
Now, for each $j=0,1,2,3$, 
 consider the rational map $\varphi'_j:\PP^7\dashrightarrow\PP^{11-j}$ 
defined 
by the linear system of quadrics passing through $S_j\subset\PP^7$.
Using Algorithm \ref{degreeOfRationalMap-algo},
anyone can easily convince himself that  $\varphi'_j$
 is birational onto its image, 
but we now determine the explicit inverse map. 
This is implemented in the file \verb!INPUT2!, and 
running this code on our machine requires about $17$ hours.

First of all, we must determine the ideal of the image of $\varphi'_j$.
We note that if $\varphi'_j$ is really what we expect, i.e.  
the restriction to a general $\PP^7\subset\PP^8$
of the map $\varphi_j$ in the statement of Theorem \ref{theoremscrolls},
then the image of $\varphi'_j$ 
must be a (complete)
intersection $Z_j\cap D_j\subset\PP^{11-j}$ 
of a complete intersection $Z_j$ of $3-j$ quadrics (i.e. the image of $\varphi_j$) 
and a  hypersurface $D_j$ of degree $2+j$. 
So,
we are only interested in finding 
generators for the ideal of the image of $\varphi'_j$
 of degree $\leq 2+j$.
For this, one can use Algorithm \ref{homogPartOfImage-algo};
however, when $j=2,3$,
it requires a lot of memory usage, and  we opt to compute the kernel of 
the corresponding ring map of $\varphi'_j$ 
(this is the most expensive part of the whole process). 
Having done this, we see that the achieved subscheme of $\PP^{11-j}$ 
is the expected complete
intersection $Z_j\cap D_j\subset\PP^{11-j}$, and can now 
apply Algorithm \ref{invertBirationalMapRS-algo}
in order to determine the inverse of $\varphi'_j$.
It returns 
$8$ forms on $\PP^{11-j}$ of degree $2+j$,
and we define the rational map 
$\eta_j:Z_j\dashrightarrow  \PP^8$ given by the linear system generated 
by these $8$ forms together with the form defining $D_j$.
We have stored our data 
$Z_j$ and $\eta_j$  in the file 
\verb!inversesOfRestrictions.m2!.

\subsubsection{Liftings of the restriction maps}\label{subsec: liftings}
In this subsubsection, 
we describe the algorithm implemented in the  
last input file, \verb!INPUT3!, 
which has the only purpose of determining (at least modulo projective 
transformations) the inverse map $\varphi_j$
of $\eta_j$, for $j=0,1,2,3$.
Running this code on our machine requires  about $16$ hours.
We proceed by discussing the cases according to the values ​​of $j$.

\paragraph*{} For $j=3$, we compute the inverse of $\eta_3$ 
via Algorithm \ref{invertBirationalMapRS-algo}.
One may also do this via Algorithm~\ref{baseLocusOfInverseMap-algo}.
Indeed the quadratic forms generating the ideal returned by 
Algorithm~\ref{baseLocusOfInverseMap-algo} define 
a map $\phi:\PP^8\dashrightarrow\PP^8$ 
such that $\phi\circ\eta_3$ 
is a projective transformation, and one can
compute 
$(\phi\circ\eta_3)^{-1}\circ \phi$.

\paragraph*{}
For $j=2$,  
we reduce 
the problem to that of determining the inverse of a 
Cremona transformation $\beta$ of $\PP^8$ of type $(8,2)$. 
This is done by parameterizing the quadric hypersurface $Z_2$ 
with a birational transformation $\rho$ of type $(2,1)$ of $\PP^8$; the map 
$\rho$ is nothing more than the inverse of the projection map of $Z_2$ 
from one of its smooth points.
Now, inverting $\beta$ still seems very difficult 
(even with Algorithm \ref{invertBirationalMapRS-algo}),
but we can nevertheless determine
 the base locus of the 
inverse map of $\beta$ 
by using Algorithm \ref{baseLocusOfInverseMap-algo} (which calls 
Algorithm \ref{homogPartOfImage-algo}). 
So, we obtain a map $\alpha:\PP^8\dashrightarrow\PP^8$ such that 
$\alpha\circ\beta$ is a projective transformation. 
At this point, if $\alpha\circ\beta$ is not the identity map, 
we compute its inverse and replace 
  $\alpha$ 
with $(\alpha\circ\beta)^{-1}\circ\alpha$. 
Finally, we compute the composition $\varphi_2:\PP^8\dashrightarrow Z_2\subset\PP^9$ 
of the map $\alpha$ with the parameterization $\rho$ of the quadric $Z_2$. 
One also see that the base locus 
of the Cremona transformation $\alpha$  is 
the union of the base locus of $\varphi_2$ 
with a simple point. 

\paragraph*{}
For $j=1$,
we use  Algorithm \ref{baseLocusOfInverseMap-algo}  
in order to compute 
a rational map $\varphi_1:\PP^8\dashrightarrow \PP^{10}$ with 
the same base locus of $\eta_1^{-1}$, but different from it.
Once this is done,
we can determine the image and the inverse of $\varphi_1$ 
(i.e. we correct $Z_1$ and $\eta_1$), by using 
Algorithms \ref{invertBirationalMapRS-algo} and \ref{homogPartOfImage-algo}.

\paragraph*{}
Finally, the case when $j=0$ is very special 
and can be obtained, for example,  as follows.
Consider the rational map 
$\widetilde{\eta_0}:\PP^{11}\dashrightarrow\PP^{11}$ 
defined by the linear 
system generated by the $9$ quadrics defining $\eta_0$ and 
the $3$ quadrics defining $Z_0$.
It turns out that $\widetilde{\eta_0}$ is a quadro-quadric Cremona 
transformation, and its inverse 
extends the map $\psi_0:\PP^7\dashrightarrow\PP^{11}$
 to a map of $\PP^{11}$. Thus we take $\varphi_0$ to be 
 the restriction to a general $\PP^8\subset\PP^{11}$ of $\widetilde{\eta_0}^{-1}$. 

\subsection{Proof of Theorem \ref{theoremscrolls}}\label{sec proof}
 We now must check that 
   the above constructed rational maps $\varphi_j$ 
   (whose data are stored in the file \verb!equationsOfBj.m2!)
    provide 
 the sought transformations.
From the main result of 
\citet{note2}, 
one immediately sees that the statement of Theorem~\ref{theoremscrolls} is equivalent 
to the following:
\emph{For $j=0,1,2,3$, there exists a special
quadratic birational transformation 
from $\PP^8$ into a factorial complete intersection of $3-j$ quadrics 
in $\PP^{11-j}$, 
whose base locus 
  is  a threefold 
 of degree $9+j$. }
 Moreover, a complete intersection of $3-j$ quadrics 
in $\PP^{11-j}$ is automatically factorial whenever its singular locus has dimension 
at most $4$ (see Subsection \ref{subsec: factoriality}).
Now the proof is completely reduced to
 computational verifications;
see Appendix~\ref{appendix A} for more details.
 
\subsection{Applications}\label{subsec: applications}
Here we list consequences of Theorem \ref{theoremscrolls}
related to the problem of classification of special birational transformations.
\subsubsection{Case $j=3$}  From Theorem \ref{theoremscrolls}, it follows  the existence of 
 a linearly normal  threefold 
 embedded in $\PP^8$ as a linear scroll 
 of degree $12$ over a rational surface $Y$ with $K_{Y}^2=5$.  
 As a consequence \citep[see][Theorem~7.1]{note2}, we deduce that there are exactly two types
of special quadratic Cremona transformations having three-dimensional base locus. 
In the other example, due to \citet{hulek-katz-schreyer},
the base locus is given by an internal projection 
 in $\PP^8$ of a general three-dimensional linear section  $\GG(1,5)\cap\PP^9\subset\PP^9$
 of  $\GG(1,5)\subset\PP^{14}$.
 Furthermore, from results in \citet{russo-qel1}, 
 it follows immediately that
  these two transformations  are also the only special
Cremona transformations of type $(2,5)$. See cases with $a=0$ in Table \ref{table: class}.
\subsubsection{Case $j=2$}\label{jequal2}  
The existence of a threefold embedded in $\PP^8$ as a linear scroll 
 of degree $11$ 
over the surface $\FF_1$  
 has been established by \citet[see also \citep{besana-fania-flamini-f1}]{alzati-fania-ruled}.  
 From Theorem~\ref{theoremscrolls}, we also see 
 that such a scroll is cut out by quadrics.  
 This is already sufficient to imply the existence 
 of a quadratic birational map 
 from $\PP^8$ into a quadric 
  $Q^8$ of $\PP^9$  \citep[see][Example~6.14]{note2}; 
 but, again from Theorem \ref{theoremscrolls},
 we see that there exists such an example with 
 $Q^8\subset\PP^9$ factorial.
In particular, 
we have solved the open problem left in
 \citet[Section~7]{note}.  
 That is, we can say that
 there are, 
 exactly, four types of 
  special birational transformations of type $(2,d)$, $d>1$,
  into a factorial hypersurface and whose 
base locus has dimension three; 
these are the cases with  $a=1$ in Table~\ref{table: class}.
(For the description of the
transformations of type $(2,1)$ 
into a hypersurface, see e.g. \citet[Section~3]{note}.)

\subsubsection{Cases $j=0$ and $j=1$}  The existence of a threefold 
embedded in $\PP^8$ as a linear scroll 
of degree $9$ over $\PP^2$, as well as 
  of degree $10$ over the quadric surface, 
has been established by   
\citet{fania-livorni-ten}.
 In these cases, one can also deduce
 directly  
 that they are cut out by quadrics and 
 that give birational maps 
 \citep[see][Examples~6.16 and 6.17]{note2}. 
 From Theorem~\ref{theoremscrolls},
 we see that there exist such maps with factorial image.
 In particular, the existence of the example in the case $j=1$, 
 together with Remark \ref{remark: factoriality},
 establishes the effectiveness of 
 the list of potential 
  special birational transformations 
 of type $(2,3)$ into a factorial 
 del Pezzo 
 variety, which is obtained in
 \citet[Corollary~7.2]{note2}; there are exactly three types 
 of such transformations 
 and correspond to the three cases with $d=3$ in Table~\ref{table: class}.
\subsubsection{Summary table}  For the convenience of the reader, 
 we summarize in Table \ref{table: class} 
 the list 
 of the  special birational transformations 
          $\varphi:\PP^n\dashrightarrow Z\subseteq\PP^{n+a}$ of type 
          $(2,d)$, with $d>1$, and whose base locus $\B$ has dimension $3$.
          For details and 
          the complete list 
          when 
          $\dim \B\leq 3$ and 
          including the case when $d=1$, 
          see \citet[Theorem~7.1]{note2}.        
          \emph{Notation}: in the Table, we denote by $Q^m$ an $m$-dimensional quadric hypersurface and
          by $Q^m_1\cap\cdots \cap Q^m_s$ a complete intersection 
          of $s$ $m$-dimensional quadric hypersurfaces; 
          $\lambda$ and $g$ respectively denote degree and sectional genus of $\B$.
 {\renewcommand{\arraystretch}{1.4}  
 \begin{table}[htbp]
\centering
\tabcolsep=4.8pt 
\begin{tabular}{cccclccl}  
\hline
$n$ & $a$ & $\lambda$ &  $g$  &  Abstract structure of $\B$ & $d$ & $Z$  & Existence   \\  
\hline
\hline
 $7$ & $1$ & $6$ & $1$  & Hyperplane section of $\PP^2\times\PP^2\subset\PP^8$ & $2$ & $Q^7$ & yes  \\ 
\rowcolor{Gray}
 \hline 
 $8$ & $0$ & $12$ & $6$  & Scroll over rational surf. with $K^2=5$   & $5$ & $\PP^8$  &  yes ($j=3$ in Thm. \ref{theoremscrolls}) \\
\hline 
 $8$ & $0$ & $13$ & $8$    &  Blow-up of a Mukai variety at a point & $5$ & $\PP^8$ & yes \citep{hulek-katz-schreyer}  \\
\hline 
 $8$ & $1$ &  $11$   &  $5$  &  Blow-up of $Q^3$ at $5$ points  & $3$ & cubic hyp. & yes \citep{note} \\
\rowcolor{Gray}
\hline 
 $8$ & $1$ &  $11$   &  $5$  & Scroll over $\FF_1$   & $4$ & $Q^8$ & yes ($j=2$ in Thm. \ref{theoremscrolls}) \\
\hline 
 $8$ & $1$ &  $12$   &  $7$ & Linear section of the spin. $\mathbb{S}^{10}\subset\PP^{15}$  & $4$ & $Q^8$ &  yes \\ 
\rowcolor{Gray}
\hline 
 $8$ & $2$ & $10$ & $4$ & Scroll over $Q^2$ & $3$ & $Q^9_1\cap Q^9_2$  &  yes ($j=1$ in Thm. \ref{theoremscrolls}) \\
\rowcolor{Gray}
\hline 
 $8$ & $3$ & $9$ & $3$ & Scroll over $\PP^2$ & $2$ & $Q^{10}_1\cap Q^{10}_2 \cap Q^{10}_3$  &  yes ($j=0$ in Thm. \ref{theoremscrolls}) \\
\hline 
 $8$ & $3$ & $9$ & $3$ & Quadric fibration over $\PP^1$ & $3$ & (*) &  yes (Remark \ref{remark: factoriality}) \\
\hline 
 $8$ &$4$ & $8$ & $2$ & Hyperplane section of $\PP^1\times Q^3$ & $2$ & (**) &   ? \ \  (yes if $Z$ is factorial)   \\
\hline 
 $8$ &$6$ & $6$ & $0$ & Rational normal scroll & $2$ & $\GG(1,5)$ & yes  \\  
\hline 
\end{tabular}
 \caption{All the special birational transformations 
          $\PP^n\dashrightarrow Z\subseteq\PP^{n+a}$ of type 
          $(2,d)$ with 
           $d>1$ and $\dim \B=3$.
             (*): $Z$ is the cone in $\PP^{11}$ over $\GG(1,4)\subset\PP^9$; 
            (**): $Z$ is a quadric section of the cone in $\PP^{12}$ over $\GG(1,4)\subset\PP^9$.
 }
\label{table: class} 
\end{table}
}
 \begin{rem}\label{remark: factoriality}
  The third last case in Table \ref{table: class} is of a special birational 
  transformation $\varphi:\PP^8\dashrightarrow Z\subset\PP^{11}$ 
  of type $(2,3)$ whose base locus is a quadric fibration of degree $9$.
  Such a transformation exists  \citep[see][Example~6.17]{note2}, 
  and $Z$ is the cone in $\PP^{11}$ 
  over the Grassmannian $\mathbb{G}(1,4)\subset\PP^9\subset\PP^{11}$ of 
  vertex a line $L\subset\PP^{11}\setminus\PP^9$.
  We now deduce that $Z$ is a factorial variety from Proposition~\ref{proposition: factoriality}.   
 \end{rem}
 \section{Two examples of quadro-quadric Cremona transformations}\label{sec: cremona}
 In this section, we apply again the algorithms described in Section \ref{sec: methods} to 
 obtain two explicit examples of Cremona transformations of type $(2,2)$.  
 Their appropriate restrictions yield explicit examples of special 
 birational transformations of type $(2,2)$ into complete intersections of quadrics. 
 The 
 code here used is contained in the file \verb!cremona22-input.m2!.
 
 \subsection{A Cremona transformation of \texorpdfstring{$\PP^{11}$}{P11}}\label{subsec: CremonaP11}
 We have already encountered this example 
 in the discussion of the case $j=0$ in Subsubsection~\ref{subsec: liftings}. 
 We return to the construction in more details and
 slightly simplifying the equations.
  Consider the set $\Lambda$ of the $7$ points in $\PP^2$:
  $[1,0,0]$, $[0,1,0]$, $[0,0,1]$, $[1,1,0]$, $[1,0,1]$, $[0,1,1]$, $[1,1,1]$. 
The saturate ideal of $\Lambda$ is generated by $3$ cubics and 
$\dim |\I_{\Lambda\subset\PP^2}(4)|=7$.
The image $Y\subset\PP^7$ of the rational map 
$\varphi=\varphi_{|\I_{\Lambda\subset \PP^2}(4)|}:\PP^2\dashrightarrow\PP^7$  
 is a smooth surface 
of degree  $9$, and its saturated ideal is generated by $12$ quadrics.
These $12$ quadrics define a 
special birational map $\psi:\PP^7\dashrightarrow\PP^{11}$  of type $(2,2)$
into a complete intersection of $4$ quadrics. 
We then consider the $8$ quadrics defining 
$\psi^{-1}$ together to the $4$ quadrics defining $\overline{\psi(\PP^7)}$.
These $12$ quadrics give a Cremona transformation of $\PP^{11}$.
Explicitly, there is a 
Cremona transformation $\omega:\PP^{11}\dashrightarrow\PP^{11}$ 
given by:
\begin{equation}\label{eqsCremonaP11}
\renewcommand{\arraystretch}{1.0}
\begin{array}{c} 
x_6\,x_{10}-x_5\,x_{11}, \ 
x_1\,x_{10}-x_4\,x_{10}+x_3\,x_{11}, \ 
x_6\,x_8-x_2\,x_{11}, \ 
x_5\,x_8-x_2\,x_{10}, \\ 
x_3\,x_8-x_0\,x_{10}, \ 
x_1\,x_8-x_4\,x_8+x_0\,x_{11}, \  
x_6\,x_7-x_1\,x_9+x_4\,x_9-x_4\,x_{11}, \  
x_5\,x_7+x_3\,x_9-x_4\,x_{10}, \\ 
x_2\,x_7-x_4\,x_8+x_0\,x_9, \  
x_1\,x_5-x_4\,x_5+x_3\,x_6, \ 
x_2\,x_3-x_0\,x_5, \ 
x_1\,x_2-x_2\,x_4+x_0\,x_6 .
\end{array}
\end{equation}
The inverse $\omega^{-1}$ is given by:
\begin{equation}\label{eqsCremonaP11INV}
\renewcommand{\arraystretch}{1.0}
\begin{array}{c}
-x_5\,x_{10}+x_4\,x_{11}, \ 
x_5\,x_9-x_8\,x_9+x_6\,x_{10}-x_1\,x_{11}+x_7\,x_{11}, \  
x_2\,x_{10}+x_3\,x_{11}, \  
x_4\,x_9-x_1\,x_{10}, \\  
-x_8\,x_9+x_6\,x_{10}+x_7\,x_{11}, \ 
x_3\,x_9+x_0\,x_{10}, \ 
x_2\,x_9-x_0\,x_{11}, \  
x_4\,x_6+x_5\,x_7-x_1\,x_8, \\  
x_2\,x_4+x_3\,x_5, \  
-x_3\,x_6+x_2\,x_7-x_0\,x_8, \  
x_1\,x_3+x_0\,x_4, \  
x_1\,x_2-x_0\,x_5  .
\end{array}
\end{equation}   
The base locus $X$ of $\omega$ 
has
dimension $6$,
degree $9$, and Hilbert polynomial 
$$
\renewcommand{\arraystretch}{1.0}
P(t)=3\,\begin{pmatrix}t+4\cr 4\end{pmatrix}−11\,\begin{pmatrix}t+5\cr 5\end{pmatrix}+9\,\begin{pmatrix}t+6\cr 6\end{pmatrix}.
$$
The support of the singular locus of $X$ is a linear space of dimension $2$. 
Moreover, $X$ is absolutely irreducible. 
Indeed, we can obtain a birational map $\gamma:\PP^6\dashrightarrow X\subset\PP^{11}$  
of type $(2,1)$, 
imitating a known construction for smooth Severi varieties 
\citep[that is, a smooth Severi variety $\mathbf{S}$ can be parameterized
by   the inverse of the restriction to $\mathbf{S}$ 
 of the linear projection from the linear span 
of the general entry locus; see][Theorem~2.4(f)]{zak-tangent}.

\subsection{A Cremona transformation of \texorpdfstring{$\PP^{20}$}{P20}}
Similarly to the previous construction, 
we now
construct  an example
 of Cremona transformation of $\PP^{20}$.   
Let $E\subset\PP^7\subset\PP^8$ be a degenerate 
$3$-dimensional Edge variety of degree $7$; namely, $E$ is
the residual intersection of $\PP^1\times\PP^3\subset\PP^7$ with
a general quadric in $\PP^7$ containing one of the $\PP^3$'s of the 
rulings of $\PP^1\times\PP^3\subset\PP^7$. 
Denoting by $t_0,\ldots,t_8$ homogeneous coordinates on $\PP^8$, 
we can take $\PP^7\subset\PP^8$ to be the hyperplane defined by $t_8$, 
and $E\subset\PP^7$ to be defined 
 by the following $8$ 
quadratic forms \citep[see][Example~5.2]{note}:   
\begin{equation}\label{edge37}
\renewcommand{\arraystretch}{1.0}
\begin{array}{c}
-t_1\,t_4+t_0\,t_5, \ 
-t_2\,t_4+t_0\,t_6, \ 
-t_3\,t_4+t_0\,t_7, \ 
        -t_2\,t_5+t_1\,t_6, \ 
        -t_3\,t_5+t_1\,t_7, \  
        -t_3\,t_6+t_2\,t_7, \\ 
        t_0^2+t_2^2+t_3^2+t_1\,t_5+t_1\,t_6+t_0\,t_7, \ 
        t_0\,t_4+t_5^2+t_2\,t_6+t_5\,t_6+t_3\,t_7+t_4\,t_7  .
\end{array}
\end{equation}
Now, consider the birational map $\varphi:\PP^8\dashrightarrow Y\subset \PP^{16}$ of type $(2,1)$ 
defined by the linear system $|\I_{E\subset \PP^8}(2)|$; namely, $\varphi$ is defined by the quadrics 
in (\ref{edge37}) together to the monomials $t_0t_8,\ldots,t_8^2$.
The image $Y$ of $\varphi$ is an irreducible variety of degree $33$, and its saturated ideal 
is generated by $21$ quadrics.
We then consider the rational map $\psi:\PP^{16}\dashrightarrow Z\subset \PP^{20}$ 
defined by these $21$ quadrics, where $Z$ denotes its image.
Here the calculations become 
much more arduous,
and we are unable to determine the ideal of $Z$.  
However, via Algorithm \ref{homogPartOfImage-algo}, 
one can  determine the ideal generated by all quadrics containing $Z$, 
and then to see that it is generated by $4$ quadrics.
Now, 
applying Algorithm \ref{invertBirationalMapRS-algo}, 
by passing as input $\psi$ and the ideal generated 
by these $4$ quadrics,
one obtains (after several hours) 
that 
  $\psi$ is birational with inverse defined by quadrics.
Finally, by taking together the $17$ quadrics defining $\psi^{-1}$, 
and the $4$ independent quadrics containing $Z\subset\PP^{20}$, 
we obtain a 
Cremona transformation $\omega:\PP^{20}\dashrightarrow\PP^{20}$, explicitly given by: 
\begin{equation}\label{eqsCremonaP20}
\renewcommand{\arraystretch}{1.0}
\begin{array}{c}
x_{10}\,x_{15}-x_9\,x_{16}+x_6\,x_{20}, \ 
 x_{10}\,x_{14}-x_8\,x_{16}+x_5\,x_{20}, \ 
 x_9\,x_{14}-x_8\,x_{15}+x_4\,x_{20}, \\
 x_6\,x_{14}-x_5\,x_{15}+x_4\,x_{16}, \ 
 x_{11}\,x_{13}-x_{16}\,x_{17}+x_{15}\,x_{18}-x_{14}\,x_{19}+x_{12}\,x_{20}, \\ 
 x_3\,x_{13}-x_{10}\,x_{17}+x_9\,x_{18}-x_8\,x_{19}+x_7\,x_{20}, \
 x_{10}\,x_{12}-x_2\,x_{13}-x_7\,x_{16}-x_6\,x_{18}+x_5\,x_{19}, \\ 
 x_9\,x_{12}-x_1\,x_{13}-x_7\,x_{15}-x_6\,x_{17}+x_4\,x_{19}, \ 
 x_8\,x_{12}-x_0\,x_{13}-x_7\,x_{14}-x_5\,x_{17}+x_4\,x_{18}, \\
 x_{10}\,x_{11}-x_3\,x_{16}+x_2\,x_{20}, \  
 x_9\,x_{11}-x_3\,x_{15}+x_1\,x_{20}, \ 
 x_8\,x_{11}-x_3\,x_{14}+x_0\,x_{20}, \\ 
 x_7\,x_{11}-x_3\,x_{12}+x_2\,x_{17}-x_1\,x_{18}+x_0\,x_{19}, \ 
 x_6\,x_{11}-x_2\,x_{15}+x_1\,x_{16}, \ 
 x_5\,x_{11}-x_2\,x_{14}+x_0\,x_{16}, \\
 x_4\,x_{11}-x_1\,x_{14}+x_0\,x_{15}, \ 
 x_6\,x_8-x_5\,x_9+x_4\,x_{10}, \ 
 x_3\,x_6-x_2\,x_9+x_1\,x_{10}, \\
 x_3\,x_5-x_2\,x_8+x_0\,x_{10}, \ 
 x_3\,x_4-x_1\,x_8+x_0\,x_9, \ 
 x_2\,x_4-x_1\,x_5+x_0\,x_6 .
  \end{array}
\end{equation}
The inverse $\omega^{-1}$ is given by: 
\begin{equation}\label{CremonaY}
\renewcommand{\arraystretch}{1.0}
\begin{array}{c}
-x_{15}\,x_{18}+x_{14}\,x_{19}-x_{11}\,x_{20}, \ 
 -x_{15}\,x_{17}+x_{13}\,x_{19}-x_{10}\,x_{20}, \ 
 -x_{14}\,x_{17}+x_{13}\,x_{18}-x_9\,x_{20}, \\
 -x_{11}\,x_{17}+x_{10}\,x_{18}-x_9\,x_{19}, \ 
 -x_{15}\,x_{16}+x_3\,x_{19}-x_2\,x_{20}, \ 
 -x_{14}\,x_{16}+x_3\,x_{18}-x_1\,x_{20}, \\
 -x_{13}\,x_{16}+x_3\,x_{17}-x_0\,x_{20}, \ 
 -x_{12}\,x_{16}-x_8\,x_{17}+x_7\,x_{18}-x_6\,x_{19}-x_5\,x_{20}, \\ 
 -x_{11}\,x_{16}+x_2\,x_{18}-x_1\,x_{19}, \ 
 -x_{10}\,x_{16}+x_2\,x_{17}-x_0\,x_{19}, \ 
 -x_9\,x_{16}+x_1\,x_{17}-x_0\,x_{18}, \\
 -x_{11}\,x_{13}+x_{10}\,x_{14}-x_9\,x_{15}, \ 
 -x_3\,x_{12}-x_8\,x_{13}+x_7\,x_{14}-x_6\,x_{15}-x_4\,x_{20}, \\
 x_3\,x_5+x_2\,x_6-x_1\,x_7+x_0\,x_8-x_4\,x_{16}, \
 -x_3\,x_{11}+x_2\,x_{14}-x_1\,x_{15}, \\
 -x_3\,x_{10}+x_2\,x_{13}-x_0\,x_{15}, \ 
 -x_3\,x_9+x_1\,x_{13}-x_0\,x_{14}, \\
 -x_8\,x_{10}+x_7\,x_{11}-x_2\,x_{12}+x_5\,x_{15}-x_4\,x_{19}, \ 
 -x_8\,x_9+x_6\,x_{11}-x_1\,x_{12}+x_5\,x_{14}-x_4\,x_{18}, \\
 -x_7\,x_9+x_6\,x_{10}-x_0\,x_{12}+x_5\,x_{13}-x_4\,x_{17}, \ 
 -x_2\,x_9+x_1\,x_{10}-x_0\,x_{11} . 
  \end{array}
\end{equation}     
The base locus $X$ of $\omega$ 
has  dimension $12$, degree $33$, 
and Hilbert polynomial 
$$
\renewcommand{\arraystretch}{1.0}
P(t)=\begin{pmatrix}t+8\cr 8\end{pmatrix}−12\,\begin{pmatrix}t+9\cr 9\end{pmatrix}+45\,\begin{pmatrix}t+10\cr 10\end{pmatrix}−66\,\begin{pmatrix}t+11\cr 11\end{pmatrix}+33\,\begin{pmatrix}t+12\cr 12\end{pmatrix}.
$$  
As in the case of Subsection \ref{subsec: CremonaP11}, 
we can obtain 
a parameterization of 
 $X$  with a birational map $\gamma:\PP^{12}\dashrightarrow X\subset\PP^{20}$
of type $(2,1)$; one of these parameterizations is stored in the file \verb!cremona22.m2!. In particular,
$X$ is absolutely irreducible.    
We 
are not able to determine the singular locus of $X$,
but one can see that the vertex of the secant variety
of $X$ is a linear space of 
dimension $5$, and it is contained in the singular locus of $X$. 
\subsection{Some remarks about related open problems}
After the classification results  
 on  special quadro-quadric birational
transformations, into  a projective space \citep{ein-shepherdbarron},
  and into a quadric hypersurface \citep{note},
one can  consider the possible next problem of classifying special quadro-quadric
 birational transformations $\varphi:\PP^n\dashrightarrow Z\subset\PP^{n+c}$ 
 into (factorial) complete intersections 
of $c\geq 2$ quadric hypersurfaces. 
Using general results about special  
birational transformations, 
and results of the theory of $(L)QEL$-manifolds contained in 
\citep{russo-qel1,ionescu-russo-conicconnected}, 
one can easily deduce that,
for $2\leq c\leq 5$, putting $r=\dim \B$ the dimension of the base locus $\B$ of $\varphi$,
one of the following cases holds: 
\begin{enumerate}[(i)]
\item\label{P2P2} $r=2$, $n=6$, $c=2$;
\item\label{cremonaP11} $r=3$, $n=8$, $c=3$; 
\item\label{G15} $r=8-c$, $n=14-c$, $c\leq4$, and if $c\leq 3$, then $\B$ is a prime Fano manifold of coindex $3$;
\item\label{cremP17}  $r=5$, $n=12$, $c=5$;  
\item\label{cremonaP20} $r=12-c$, $n=20-c$, $\B$ is a prime Fano manifold of coindex $4$;
\item\label{E6} $r=16-c$, $n=26-c$, $\B$ is a prime Fano manifold of coindex $5$.  
\end{enumerate}
In the cases (\ref{P2P2}) and (\ref{cremonaP11}),
since $r\leq3$,
we apply the results of \citet{note2};
so we deduce that, in case (\ref{P2P2}), 
$\B$ is  a linear section of 
$\PP^2\times\PP^2\subset\PP^8$, and
 in case (\ref{cremonaP11}),  $\B$ is a threefold scroll of 
 degree $9$ over $\PP^2$. 
 In case (\ref{G15}) with $c\leq 3$, 
 by applying results of 
  \citet{mukai-biregularclassification}, 
we see that $\B$ is a linear section of $\GG(1,5)\subset\PP^{14}$.
In cases (\ref{cremonaP20}) and (\ref{E6}), 
using the same argument of \citet[Proposition~4.4(3)]{note}, 
we can determine the Hilbert polynomial 
of $\B$, and so we see that it is, respectively, as that 
of a general linear section 
of the scheme defined by (\ref{eqsCremonaP20}), 
and as that 
of a general 
linear section of $E_6\subset\PP^{26}$; 
in particular $\deg \B$ is, respectively, $33$ and $78$.
Finally, case (\ref{cremonaP20}) with $c=2$  is excluded 
from the main results in \citep{mok,russo-qel1}. 
Summarizing, in particular, we have the following: 
\begin{prop}
 Keeping notation as above, if $0\leq c\leq 3$, then 
 $\B$ has the same Hilbert polynomial as
 a general $c$-codimensional linear section 
 of the base locus of one of the following quadro-quadric Cremona transformations:
 \begin{enumerate}[(a)]
  \item one of the four special quadro-quadric Cremona transformations;
  \item one of the two transformations  defined by (\ref{eqsCremonaP11}) and (\ref{eqsCremonaP20}) (here $c=3$).
 \end{enumerate}
\end{prop}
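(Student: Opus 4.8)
The statement is a summary of the case analysis carried out above, so the plan is to collect the possible outcomes according to the value of $c$ and, in each case, to match the base locus $\B$ with the base locus of one of the listed Cremona transformations. I would first treat the two boundary values lying outside the range $2\leq c\leq5$ of the list (\ref{P2P2})--(\ref{E6}). For $c=0$ the map $\varphi$ is a special quadro-quadric Cremona transformation of $\PP^n$, so by \citet{ein-shepherdbarron} its base locus is a Severi variety, that is, the base locus of one of the four special quadro-quadric Cremona transformations, realized as a $0$-codimensional section. For $c=1$, the classification of special quadro-quadric transformations into a quadric hypersurface obtained in \citet{note} gives that $\B$ is a hyperplane section of a Severi variety, a $1$-codimensional such section. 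Both values fall under alternative (a).

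For $c=2$ and $c=3$ I would read off the admissible cases from the list, discarding those with $c\geq4$ as well as case (\ref{cremonaP20}) at $c=2$, which was excluded via \citep{mok,russo-qel1}. At $c=2$ the surviving cases are (\ref{P2P2}), (\ref{G15}) and (\ref{E6}), whose base loci are $c$-codimensional linear sections of the Severi varieties $\PP^2\times\PP^2$ (by \citet{note2}), $\GG(1,5)$ (by \citet{mukai-biregularclassification}) and $E_6$ (by \citet[Proposition~4.4(3)]{note}); all three therefore fall under alternative (a). At $c=3$ the cases are (\ref{cremonaP11}), (\ref{G15}), (\ref{cremonaP20}) and (\ref{E6}); among these, (\ref{G15}) and (\ref{E6}) again yield sections of $\GG(1,5)$ and $E_6$, hence (a), whereas (\ref{cremonaP11}) yields a threefold scroll of degree $9$ over $\PP^2$ (from \citet{note2}) and (\ref{cremonaP20}) a coindex-$4$ prime Fano manifold. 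These last two are exactly the cases matching alternative (b), and since they arise only at $c=3$ this accounts for the parenthetical restriction in the statement.

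The only genuine computation, as opposed to bookkeeping, is the verification that in the two cases (\ref{cremonaP11}) and (\ref{cremonaP20}) the Hilbert polynomial of $\B$ coincides with that of a $3$-codimensional general linear section of the base locus of the transformation (\ref{eqsCremonaP11}) of $\PP^{11}$, respectively (\ref{eqsCremonaP20}) of $\PP^{20}$. Concretely, I would take the explicit Hilbert polynomials of these base loci $X$---of dimension $6$ and degree $9$ in the first case, of dimension $12$ and degree $33$ in the second---displayed above, apply the third finite difference $\Delta^3$, and check that the results match the Hilbert polynomial of $\B$ read off from its numerical invariants (a degree-$9$ threefold of sectional genus $3$, respectively a ninefold of degree $33$, the latter computed via \citet[Proposition~4.4(3)]{note}). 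For the Severi-variety cases, and for $c=0,1$ where $\B$ is literally a linear section, the analogous comparison is immediate, the Hilbert polynomials of $\PP^2\times\PP^2$, $\GG(1,5)$ and $E_6$ being classical; so this finite-difference check for the two non-Severi cases is the crux of the argument.
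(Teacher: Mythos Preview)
Your proposal is correct and follows the paper's approach: the proposition is presented there as a summary of the preceding case analysis (``Summarizing, in particular, we have the following''), and you have accurately reconstructed that analysis, including the separate treatment of $c=0,1$ via \citet{ein-shepherdbarron} and \citet{note}, the exclusion of case~(\ref{cremonaP20}) at $c=2$, and the identification of the two non-Severi outcomes at $c=3$ with the transformations (\ref{eqsCremonaP11}) and (\ref{eqsCremonaP20}).

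One small overstatement to correct: for case~(\ref{E6}) you write that $\B$ \emph{is} a linear section of $E_6$, citing \citet[Proposition~4.4(3)]{note}. The paper only claims (and that argument only yields) that $\B$ has the same Hilbert polynomial as such a section; it does not establish a geometric identification. Since the proposition itself only asserts equality of Hilbert polynomials, this does not damage your proof, but the wording should be weakened accordingly. Your proposed finite-difference check $\Delta^3 P_X$ for cases~(\ref{cremonaP11}) and~(\ref{cremonaP20}) is a clean way to make explicit the comparison that the paper leaves implicit in its construction of $\varphi_0$ as a restriction of the $\PP^{11}$ Cremona.
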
 

\appendix
\section{Computational details for the proof of Theorem~\ref{theoremscrolls}}\label{appendix A}
In this appendix, we explain how one can deduce 
Theorem~\ref{theoremscrolls} from the data of the file
\verb!equationsOfBj.m2!, which contains the output of the
procedure described in Subsection~\ref{sec construction}.

Loading this file in {\sc Macaulay2}
 produces several groups of 
 variables  indexed by an integer $j$ from $0$ to $3$:
 \begin{itemize}
\item \verb!phi_j! is a ring map 
 corresponding to a quadratic rational map $\varphi_j:\PP^8\dashrightarrow\PP^{11-j}$;
\item \verb!idealB_j! is simply defined as the ideal of the base locus of $\varphi_j$;
\item \verb!Z_j! is the ideal of 
           a  complete intersection $Z_j\subseteq \PP^{11-j}$ of $3-j$ quadrics;
\item \verb!eta_j!    is a ring map 
 corresponding to a rational map $\eta_j:\PP^{11-j}\dashrightarrow\PP^{8}$
 defined by forms of degree $2+j$.
\end{itemize}
In order to give a glimpse of what
 this file contains, 
we show the 
number of characters for writing the main data:
{\footnotesize
\begin{verbatim} 
i1 : load "equationsOfBj.m2"
i2 : -- number characters for writing phi_j
     for j to 3 list # toString phi_j 
o2 = {1668, 16044, 18880, 26486}
i3 : -- number characters for writing eta_j 
     for j to 3 list # toString eta_j 
o3 = {1231, 160297, 1409699, 1702444} 
\end{verbatim}
} \noindent 
 It is quite standard
 and fast
 to check that 
 \verb!idealB_j! is the saturated homogeneous ideal of 
 a threefold $\B_j\subset\PP^8$ of degree $9+j$, 
 as well as computing the dimension of the singular locus of $Z_j$,
 and so to see that $Z_j$ is factorial. 
 Further, using a tool like  \verb!topComponents!, 
 one sees that $\B_j$ is equidimensional.
 We conclude 
 by checking that $\varphi_j$ is birational, 
 $\B_j$ is smooth and absolutely connected, and the image of $\varphi_j$ is $Z_j$. 
 \subsection{Birationality of \texorpdfstring{$\varphi_j$}{phij}}
 We see that $\varphi_j$ is birational, by checking that $\eta_j$ is a lifting 
 to $\PP^{11-j}$ of the inverse of $\varphi_j$, i.e. that $\eta_j\circ\varphi_j$ 
 coincides, as a rational map, with the identity of $\PP^8$ (Algorithm~\ref{isInverseMap-algo}):
 {\footnotesize 
\begin{verbatim}
i4 : load "bir.m2"; 
i5 : for j to 3 list time isInverseMap(matrix phi_j,matrix eta_j)
     -- used 0.0944848 seconds
     -- used 78.4827 seconds
     -- used 1878.9 seconds
     -- used 31155.8 seconds 
o5 = {true, true, true, true}
\end{verbatim}
} \noindent 
By the way of illustration, we use 
Algorithm \ref{degreeOfRationalMap-algo} 
to check again birationality:
 {\footnotesize 
\begin{verbatim}
i6 : for j to 3 list time degreeOfRationalMap(matrix phi_j)
     -- used 0.212941 seconds
     -- used 0.47604 seconds
     -- used 0.790374 seconds
     -- used 2.23214 seconds 
o6 = {1, 1, 1, 1} 
\end{verbatim}
} \noindent 
Further, we determine all the projective degrees of $\varphi_j$ (see Remark~\ref{remark: projectiveDegrees}):
 {\footnotesize 
\begin{verbatim}
i7 : time projectiveDegrees(matrix phi_0,ideal ringP8)
     -- used 697.504 seconds 
o7 = {8, 16, 23, 23, 16, 8, 4, 2, 1} 
i8 : time projectiveDegrees(matrix phi_1,ideal ringP8)
     -- used 2209.8 seconds 
o8 = {4, 12, 20, 22, 16, 8, 4, 2, 1} 
i9 : time projectiveDegrees(matrix phi_2,ideal ringP8)
     -- used 10434.6 seconds
o9 = {2, 8, 17, 21, 16, 8, 4, 2, 1}
i10 : time projectiveDegrees(matrix phi_3,ideal ringP8)
      -- used 3477.02 seconds 
o10 = {1, 5, 14, 20, 16, 8, 4, 2, 1} 
\end{verbatim}
} \noindent 

 \subsection{Regularity of \texorpdfstring{$\B_j$}{Bj}}
We check the absolute connection of $\B_j$ using Algorithm \ref{numberConnectedComponents-algo}:
{\footnotesize 
\begin{verbatim}
i11 : for j to 3 list time numberConnectedComponents idealB_j 
      -- used 1.08971 seconds
      -- used 60.295 seconds
      -- used 290.407 seconds
      -- used 12015.7 seconds 
o11 = {1, 1, 1, 1} 
\end{verbatim}
} \noindent 
Now, in order to prove that the scheme $\B_j$ is smooth, 
it is sufficient to show that it is smooth as a scheme over $\ZZ/(p)$,
where $p$ is some prime 
that does not produce a lowering of the codimension (see  Subsection \ref{smoothnessBaseLocus}).
A good prime that we can take is for example $113$, and 
this is verifiable through Sage in a few minutes:
{\footnotesize 
\begin{verbatim}
i12 : for j to 3 list isSmooth(idealB_j,reduceToChar=>113,Use=>Sage) 
o12 = {true, true, true, true} 
\end{verbatim}
} \noindent 
Alternately,  
we have also 
provided the file
\verb!Bjs.sage!, 
which contains the definitions of the schemes $\B_j$'s
in the Sage language. 
So we can check the smoothness of the $\B_j$'s 
by using directly Sage (for instance $j=3$):
{\footnotesize
\begin{verbatim}
sage: kk=GF(113);
sage: load('Bjs.sage') 
sage: SingB3=B3.intersection(P8.subscheme(jacobian(eqsB3,varsP8).minors(5)));
sage: SingB3.dimension() 
-1
sage: exit
Exiting Sage (CPU time 1m47.59s, Wall time 2m14.86s).
\end{verbatim}
} \noindent 
\subsection{Image of \texorpdfstring{$\varphi_j$}{phij}}\label{Subsection appendix Image phij}
Finally, we compute the image of $\varphi_j$.
Running the following code also shows how 
Algorithm~\ref{homogPartOfImage-algo} turns out to be much faster than computing  
the kernel of the ring map \verb!phi_j!. 
{\footnotesize
\begin{verbatim}
i13 : time Z0 = homogPartOfImage(phi_0,2);
      -- used 3.93689 seconds
i14 : time Z0'= kernel phi_0;
      -- used 3142.35 seconds
i15 : Z0' == ideal Z0 and Z0' == Z_0
o15 = true
i16 : time Z1 = homogPartOfImage(phi_1,2);
      -- used 2.83038 seconds
i17 : time Z1'= kernel phi_1;
      -- used 15228.3 seconds
i18 : Z1' == ideal Z1 and Z1' == Z_1
o18 = true
i19 : time Z2 = homogPartOfImage(phi_2,2);
      -- used 1.30964 seconds
i20 : time Z2'= kernel phi_2;
     -- used 14754.2 seconds
i21 : Z2' == ideal Z2 and Z2' == Z_2
o21 = true 
\end{verbatim}
} \noindent

\begin{ack}
I wish to thank
the Department of Computer Science    
of the 
Federal University of Rio de Janeiro 
for allowing me to access
to the supercomputer where
the codes of this paper were run.
I also wish to thank Nivaldo Medeiros and Abramo Hefez
for stimulating discussions, and 
Francesco Russo 
for introducing me to the fascinating subject 
of special birational transformations. 
A special thanks also goes to 
the associate editor and the anonymous referees
 of the Journal of Symbolic Computation
for their helpful comments.
\end{ack}


\end{document}